\theoremstyle{plain}
\newtheorem{thm}{Theorem}[section]
\newtheorem{theorem}[thm]{Theorem}
\newtheorem{proposition}[thm]{Proposition}
\theoremstyle{definition}
\newtheorem{definition}[thm]{Definition}
\newtheorem{remark}[thm]{Remark}
\newtheorem{example}[thm]{Example}
\newtheorem{question}[thm]{Question}
\newtheorem{thevarthm}[thm]{\varthmname}
\newenvironment{varthm*}[1]{\trivlist\item[]{\bf #1.}\it}{\endtrivlist}
\def\keywordname{{\bfseries Keywords}}%
\def\keywords#1{\par\addvspace\medskipamount{\rightskip=0pt plus1cm
\def\and{\ifhmode\unskip\nobreak\fi\ $\cdot$
}\noindent\keywordname\enspace\ignorespaces#1\par}}
\def\subclassname{{\bfseries Mathematics Subject Classification
(2000)}\enspace}
\def\subclass#1{\par\addvspace\medskipamount{\rightskip=0pt plus1cm
\def\and{\ifhmode\unskip\nobreak\fi\ $\cdot$
}\noindent\subclassname\ignorespaces#1\par}}
\begin{document}
\title{On Seshadri constants and point-curve configurations}
\author{Marek Janasz and Piotr Pokora }
\date{\today}
\maketitle
\begin{abstract}
In the note we study the multipoint Seshadri constants of $\mathcal{O}_{\mathbb{P}^{2}_{\mathbb{C}}}(1)$ centered at singular loci of certain curve arrangements in the complex projective plane. Our first aim is to show that the values of Seshadri constants can be approximated with use of a combinatorial invariant which we call the configurational Seshadri constant. We study specific examples of point-curve configurations for which we provide actual values of the associated Seshadri constants. In particular, we provide an example based on Hesse point-conic configuration for which the associated Seshadri constant is computed by a line. This shows that multipoint Seshadri constants are not purely combinatorial.
\keywords{Seshadri constants, point-curve configurations, projective plane, plane curves}
\subclass{14C20,14N10, 14N20}
\end{abstract}
\thispagestyle{empty}

\section{Introduction}
In the present note we study multipoint Seshadri  constants of $\mathcal{O}_{\mathbb{P}^{2}_{\mathbb{C}}}(1)$ centered at singular loci of a certain class of curve arrangements in the complex projective plane. This path of studies was initiated by the second author in \cite{Pokora2019} in the context of line arrangements and special singular curves in the complex projective plane. Before we present the main goal of our paper, let us present basics on the multipoint Seshadri constants. Let
$X$ be a complex projective variety of dimension $\dim X = n$ and let $L$ be a nef line bundle. The multipoint Seshadri constant of $L$ at $r\geq 1$ points $x_{1}, ..., x_{r} \in X$ is defined as 
$$\varepsilon(X,L;\, x_{1}, ..., x_{r}) = \inf_{\{x_{1},...,x_{r}\} \cap C \neq \emptyset} \frac{L\cdot C}{\sum_{i=1}^{r} {\rm mult}_{x_{i}}C},$$
where the infimum is taken over all irreducible and reduced curves $C$ on $X$, and as usually ${\rm mult}_{x_{i}}(C)$ denotes the multiplicity of $C$ at $x_{i}$. There exists an upper-bound on the multipoint Seshadri constant, namely we have
$$\varepsilon(X,L;\, x_{1}, ..., x_{r})  \leq \sqrt[n] {\frac{L^{n}}{r} }.$$
It is well-known that the multipoint Seshadri constant of $L$, treated as a function of points $x_{1}, ..., x_{r}$ as variables, attains its maximal value at a set of generic points, see \cite{Oguiso}. 

If we restrict our attention to the case when $X = \mathbb{P}^{2}_{\mathbb{C}}$, then the Seshadri constant of $\mathcal{O}_{\mathbb{P}^{2}_{\mathbb{C}}}(1)$ centered at $r$ generic points is governed by the celebrated Nagata conjecture, so  we have, at least, a conjectural picture of what can happen in this scenario \cite{SzSz}. On the other side, we do not know much about a potential or even hypothetical behaviour of the multipoint Seshadri constants if we allow to consider special point configurations in the complex projective plane. The Leitmotif of our investigations is oriented by the following question of the second author that recently has attracted the attention of researchers \cite{Ober}. 
\begin{question}
\label{MQ}
Let $Z$ be the set of all singular points of an arrangement
of lines $\mathcal{L} \subset \mathbb{P}^{2}_{\mathbb{C}}$ which is not a pencil of lines. Is it true that the multipoint Seshadri constant is equal to
$$\varepsilon(\mathbb{P}^{2}_{\mathbb{C}}, \mathcal{O}_{\mathbb{P}^{2}_{\mathbb{C}}}(1);Z) = \frac{1}{{\rm mpl}(Z)},$$
where ${\rm mpl}(Z)$ is the maximal number of collinear points in $Z$?
\end{question}
In other words, in this question we ask whether the combinatorics of the line arrangement $\mathcal{L}$ would be enough to compute the multipoint Seshadri constants $\mathcal{O}_{\mathbb{P}^{2}_{\mathbb{C}}}(1)$ centered at the singular locus of $\mathcal{L}$. This approach sits on the boundary of the combinatorics and algebraic geometry, and it might lead to new developments in these two areas. We predict that this question should not have an affirmative answer, but somehow surprisingly it is difficult to verify it in the whole generality due to complications that occur when we study the geometry of line arrangements. On the other side, it seems natural to extend such studies to a wider class of curve arrangements, namely to the so-called $d$-arrangements of plane curves. This notion was introduced in \cite{PRSz} in the context of Harbourne indices and the bounded negativity conjecture. It turned out that it is more efficient to study the negativity phenomenon from a viewpoint of curve arrangements instead of focusing on the case of irreducible curves which are notoriously difficult to construct. Having this motivation, we decided to introduce a new version of the multipoint Seshadri constants for reduced curves.
\begin{definition}
Let $\mathcal{C} = \{C_{1}, ..., C_{k}\} \subset \mathbb{P}^{2}_{\mathbb{C}}$ be an arrangement of irreducible curves and denote by ${\rm Sing}(\mathcal{C})$ the singular locus of $\mathcal{C}$, i.e., the set of all singular points of the components and points where two or more curves intersect. We define the configurational Seshadri constant of $\mathcal{C}$ as
$$\varepsilon_{\mathcal{C}}(\mathcal{O}_{\mathbb{P}^{2}_{\mathbb{C}}}(1)) =  \frac{{\rm deg}(\mathcal{C})}{\sum_{P \in {\rm Sing}(\mathcal{C})}{\rm mult_{P}}(\mathcal{C})}.$$

\end{definition}
At the first glance this notion seems to be far away from the classical multipoint Seshadri constants, but the example below shows something opposite.
\begin{example}
Let $\mathcal{F} = \{\ell_{1}, ..., \ell_{3n}\} \subset \mathbb{P}^{2}_{\mathbb{C}}$ be the $n$-th Fermat arrangement of $3n$ lines. This arrangement is given by the zeros of the following defining polynomial
$$Q(x,y,z) :=(x^{n}-y^{n})(y^{n}-z^{n})(z^{n}-x^{n}).$$
A simple check tells us that the arrangement has exactly $n^2$ intersection points of multiplicity $3$ and exactly $3$ points of multiplicity $n$. It was shown in \cite{Pokora2019} that if $n\geq 2$ one has
$$\varepsilon( \mathbb{P}^{2}_{\mathbb{C}},\mathcal{O}_{\mathbb{P}^{2}_{\mathbb{C}}}(1); {\rm Sing}(\mathcal{F})) = \frac{1}{n+1},$$ and we also have
$$\varepsilon_{\mathcal{F}}(\mathcal{O}_{\mathbb{P}^{2}_{\mathbb{C}}}(1)) = \frac{3n}{3\cdot n^{2} + n \cdot 3} = \frac{1}{n+1},$$
so these two values coincide for every $n\geq 2$.
\end{example}
Let us present briefly the content of the present note. In Section $2$, we recall the notion of $d$-arrangements $\mathcal{C}$ of plane curves and  we provide for them a lower bound on $\varepsilon_{\mathcal{C}}(\mathcal{O}_{\mathbb{P}^{2}_{\mathbb{C}}}(1))$. Our bound has some space for improvements, but the main advantage of our approach is that the bound does not depend on the geometry of particular curve arrangements and gives the correct order of the magnitude for the expected values of $\varepsilon_{\mathcal{C}}$. In Section $3$, we provide actual values of the multipoint Seshadri constants for some classes of curve arrangements and we compare them with the associated values of $\varepsilon_{\mathcal{C}}$. Our main result in Section 3 shows that there exists a very specific arrangement of $12$ conics, called both Chilean and Hesse arrangement, such that the multipoint Seshadri constant for $\mathcal{O}_{\mathbb{P}^{2}_{\mathbb{C}}}(1)$ centered at the singular locus consisting of $21$ points is computed by a line.

\textbf{Notation:} We are working exclusively over the complex numbers. If $p$ is a point on a curve $C$, then we denote by ${\rm mult}_{p}(C) = m_{p}(C)$ the multiplicity of $C$ at $p$, and if it is clear from the context which curve is considered we abbreviate as $m_{p}$. Abusing the notation, we will consider a curve arrangement $\mathcal{C}$ both as a combinatorial object and as a divisor hoping that it will not lead to confusion.

\section{Configurational Seshadri constants for certain point-curve configurations}
In this section we will consider a special class of point-curve configurations, the so-called $d$-arrangements.

\begin{definition}
Let $\mathcal{C} = \{C_{1}, ..., C_{k}\} \subseteq \mathbb{P}^{2}_{\mathbb{C}}$ be an arrangement of $k\geq 3$ curves. Then $\mathcal{C}$ is a $d$-arrangement with $d\geq 1$ if
\begin{itemize}
    \item every $C_{i}$ is smooth of degree ${\rm deg}(C_{i}) = d$;
    \item all singular points of $\mathcal{C}$ are ordinary, i.e., they look locally as $\{x^{\ell} = y^{\ell}\}$ for some $\ell \geq 2$;
    \item there is no point where all the curves meet.
\end{itemize}
\end{definition}

Such a class of point-curve configurations enjoys many algebro-combinatorial properties that are highly desirable in many applications. Let us recall that for $d$-arrangements we have the following combinatorial count
\begin{equation}
d^2 { k \choose 2} = \sum_{r\geq 2} {r \choose 2}t_{r}  \, \,= \sum_{p \in {\rm Sing}(\mathcal{C})} { m_{p} \choose 2},
\end{equation}
where $t_{r}$ denotes the number of $r$-fold points, $m_{p}$ denotes the multiplicity of a given singular point $p \in {\rm Sing}(\mathcal{C})$, and this number is equal to the number $r_{p}$ of analytic branches passing through this point.
Now, if $\mathcal{C}$ is a $d$-arrangement, then we can easily show that restricting to each curve $C_{i} \in \mathcal{C}$ one has
$$d^{2}(k-1) = \sum_{p \in {\rm Sing}(\mathcal{C}) \cap C_{i}}(m_{p}-1),$$
so in particular if on $C_{i}$ the only singular points are double points, then we have exactly $d^{2}(k-1)$ such points. Moreover, we are going to use the following abbreviations:
$$f_{0} = \sum_{r\geq 2}t_{r}, \quad f_{1} = \sum_{r\geq 2}rt_{r} = \sum_{p \in {\rm Sing}(\mathcal{C})}m_{p}.$$

For line arrangements we have the following celebrated inequality. 
\begin{theorem}[Hirzebruch]
Let $\mathcal{L} \subset \mathbb{P}^{2}_{\mathbb{C}}$ be an arrangement of $k\geq 6$ lines such that $t_{k} = t_{k-1} = 0$, then one has $$t_{2} + t_{3} \geq k + \sum_{r\geq 4}(r-4)t_{r}.$$
\end{theorem}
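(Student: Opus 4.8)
The plan is to derive this combinatorial inequality from the Bogomolov--Miyaoka--Yau inequality $c_1^2\le 3c_2$ for minimal surfaces of general type, following Hirzebruch's method of Kummer coverings; alternatively one may invoke the logarithmic (orbifold) version of the Miyaoka--Yau inequality directly on the pair $(\mathbb{P}^2_{\mathbb{C}},\mathcal{L})$. First I would turn the arrangement into a simple normal crossings divisor. Let $\pi\colon \widetilde{Y}\to\mathbb{P}^2_{\mathbb{C}}$ be the blow-up at every singular point $p$ of $\mathcal{L}$ with $m_p\ge 3$, and let $D\subset\widetilde Y$ be the reduced divisor formed by the strict transforms of the $k$ lines together with the exceptional curves. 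Then $D$ has only normal crossings, and its combinatorics (the self-intersections of the strict transforms and their incidences with the exceptional curves) records exactly the numbers $t_r$.

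Next I would construct the associated abelian cover. For an integer $n\ge 2$ let $f\colon X_n\to\widetilde Y$ be the $(\mathbb{Z}/n\mathbb{Z})^{k-1}$ Kummer cover branched along $D$, defined by the linear relations among the defining forms of $\ell_1,\dots,\ell_k$; for $n$ prime to the relevant data $X_n$ is smooth. The crucial point---and the step I expect to be the main obstacle---is to check that \emph{exactly} under the hypotheses $k\ge 6$ and $t_k=t_{k-1}=0$ the surface $X_n$ is minimal of general type once $n$ is large. These assumptions are what rule out the degenerate cases (pencils and near-pencils), in which the canonical divisor fails to be nef and big, or in which $X_n$ acquires rational or $(-2)$-configurations that destroy minimality; controlling this requires a careful analysis of the branch locus $D$ and of the log-canonical geometry of $(\widetilde Y,D)$. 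Granting it, the Miyaoka--Yau inequality yields $c_1^2(X_n)\le 3c_2(X_n)$.

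It then remains to compute the Chern numbers of $X_n$ and let $n$ vary. Using the standard formulas expressing $c_1^2$ and $c_2$ of an abelian cover in terms of the self-intersections and Euler numbers of the strata of $D$, both $c_1^2(X_n)$ and $c_2(X_n)$ become explicit polynomials in $n$ whose coefficients are built from $k$ and the $t_r$ (equivalently from $f_0$, $f_1$, and the identity $\binom{k}{2}=\sum_{r\ge 2}\binom{r}{2}t_r$). Substituting into $3c_2(X_n)-c_1^2(X_n)\ge 0$ and extracting the coefficient of the top power of $n$ collapses everything to a purely numerical statement, which after simplification is Hirzebruch's sharp bound $t_2+\tfrac34 t_3\ge k+\sum_{r\ge 5}(r-4)t_r$. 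Since $t_2+t_3\ge t_2+\tfrac34 t_3$ and $\sum_{r\ge 5}(r-4)t_r=\sum_{r\ge 4}(r-4)t_r$ because the $r=4$ term vanishes, this immediately gives the stated inequality $t_2+t_3\ge k+\sum_{r\ge 4}(r-4)t_r$.
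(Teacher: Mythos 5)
The paper itself does not prove this theorem at all: it is quoted as Hirzebruch's classical result, with the proof delegated to the reference \cite{Hirzebruch}. So your proposal can only be measured against Hirzebruch's original argument, and your overall strategy is indeed that argument: blow up the points of multiplicity at least $3$, form the $(\mathbb{Z}/n\mathbb{Z})^{k-1}$ Kummer cover branched along the arrangement, check that the hypotheses $k\geq 6$, $t_k=t_{k-1}=0$ guarantee that the cover has non-negative Kodaira dimension (in fact is of general type), and feed its Chern numbers into $c_1^2\leq 3c_2$. Up to and including the general-type verification, which you rightly flag as the technical core, the outline is sound. (Minimality, incidentally, is a red herring: if the minimal model satisfies $c_1^2\leq 3c_2$, so does any blow-up of it, since blowing up decreases $c_1^2$ and increases $c_2$.)

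The genuine gap is your last step. Both $c_1^2(X_n)$ and $c_2(X_n)$ are $n^{k-3}$ times quadratic polynomials in $n$, and applying $3c_2-c_1^2\geq 0$ for large $n$ and ``extracting the coefficient of the top power of $n$'' yields only the nonnegativity of that leading coefficient. A direct computation (using $K_{X_n}=f^{*}\bigl(K_{\widetilde Y}+\tfrac{n-1}{n}D\bigr)$ and a stratified Euler-number count) gives
$$\frac{3c_2(X_n)-c_1^{2}(X_n)}{n^{k-3}} = (f_0-k)\,n^{2} + (\text{lower order terms in } n),$$
so the asymptotic statement is merely $f_0=\sum_{r\geq 2}t_r\geq k$ --- a de Bruijn--Erd\H{o}s-type bound, with no weights $(r-4)t_r$ and with $t_2+t_3$ replaced by all of $f_0$; this is strictly weaker than the theorem. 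Hirzebruch's inequality lives in the lower-order coefficients, so one must fix a specific finite exponent and keep the whole polynomial. Concretely, for $n=3$ the inequality $c_1^{2}\leq 3c_2$ reads
$$(2k-9)^{2}-\sum_{r\geq 3}(2r-5)^{2}t_r \;\leq\; 81-36k+12f_1-9f_0-3t_2,$$
and after substituting the combinatorial identity $k^{2}-k=\sum_{r\geq 2}r(r-1)t_r$ this collapses exactly to $t_2+t_3\geq k+\sum_{r\geq 5}(r-4)t_r$, which is the stated inequality since the $r=4$ term vanishes. Your claim that the leading coefficient even produces the sharp form $t_2+\tfrac34 t_3\geq k+\sum_{r\geq 5}(r-4)t_r$ is wrong for the same reason; that refinement requires a finer treatment of the fibers over the triple points, not asymptotics in $n$. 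In short: replace ``let $n$ vary and take the top coefficient'' by ``set $n=3$ and use all the terms,'' and your sketch becomes Hirzebruch's proof.
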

If $d\geq 2$, then for such $d$-arrangements there exists a Hirzebruch-type inequality proved by Pokora, Roulleau, and Szemberg in \cite{PRSz}.
\begin{theorem}\label{PSZR}
Let $\mathcal{C}$ be a $d$-arrangement of $k\geq 3$ curves with $d\geq 2$. Assume that $t_{k}=0$, then
$$\bigg(\frac{7}{2}d - \frac{9}{2}\bigg)dk + t_{2} + t_{3} \geq \sum_{r\geq 4}(r-4)t_{r}.$$
\end{theorem}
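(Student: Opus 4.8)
The plan is to apply a Bogomolov--Miyaoka--Yau-type inequality, in its logarithmic/orbifold form, to the pair $(\mathbb{P}^{2}_{\mathbb{C}}, \mathcal{C})$ after resolving the singularities of the arrangement, and then to translate the resulting numerical inequality into the combinatorics of the $t_{r}$ by means of the count (1). First I would blow up all $f_{0}$ points of ${\rm Sing}(\mathcal{C})$ to obtain $\pi \colon Y \to \mathbb{P}^{2}_{\mathbb{C}}$; since every singular point is ordinary, a single blow-up separates the branches, so that $D = \widetilde{\mathcal{C}} + \sum_{p} E_{p}$, the union of the strict transforms and the reduced exceptional curves, is a simple normal crossing divisor.

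The next, essentially routine, step is to compute the logarithmic Chern numbers of $(Y,D)$. Writing $H$ for the pullback of the hyperplane class, from $K_{Y} = -3H + \sum_{p}E_{p}$ and $\widetilde{\mathcal{C}} = dk\,H - \sum_{p}m_{p}E_{p}$ one gets $K_{Y}+D = (dk-3)H + \sum_{p}(2-m_{p})E_{p}$, hence $\overline{c}_{1}^{\,2} = (dk-3)^{2} - \sum_{p}(m_{p}-2)^{2}$. For the second Chern number I would compute the Euler characteristic of the complement: using $e(Y) = 3 + f_{0}$, the fact that a smooth plane curve of degree $d$ has $e = d(3-d)$, and that $D$ acquires exactly $f_{1} = \sum_{p} m_{p}$ nodes, one finds $\overline{c}_{2} = e(Y)-e(D) = 3 - f_{0} + f_{1} + kd(d-3)$. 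The identity (1) then lets me replace $\sum_{p}m_{p}^{2} = d^{2}k(k-1) + f_{1}$ and rewrite everything through $f_{0}$, $f_{1}$ and the $t_{r}$.

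The crucial point is which inequality to feed these numbers into. The naive logarithmic bound $\overline{c}_{1}^{\,2} \le 3\overline{c}_{2}$ is too weak: after substituting (1) it collapses to a trivially true statement for $d \ge 2$ (and to only $f_{0} \ge k$ in the line case). What is really needed is the sharper orbifold version --- Langer's orbifold Bogomolov--Miyaoka--Yau inequality, equivalently the Kummer/abelian-cover computation in the spirit of Hirzebruch --- in which one puts branch weights $1-\tfrac{1}{n}$ along the components and keeps the local orbifold corrections at the points of high multiplicity. These corrections are precisely what generate the term $\sum_{r \ge 4}(r-4)t_{r}$, while the Euler-characteristic and self-intersection contributions of the degree-$d$ curves produce the coefficient $\tfrac{7}{2}d - \tfrac{9}{2}$; note that specialising to $d=1$ returns the line inequality $t_{2}+t_{3} \ge k + \sum_{r\ge4}(r-4)t_{r}$ recorded above, which is a useful consistency check on the constants.

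The main obstacle is verifying the positivity hypothesis under which the orbifold inequality is valid, namely that $K_{Y}$ plus the chosen orbifold boundary is nef and big, so that the pair is of log general type. This is exactly where the standing assumptions are consumed: $d \ge 2$ supplies enough positivity from the components themselves, while $t_{k}=0$ together with the requirement that no point lies on all curves rules out the degenerate pencil-like configurations for which the relevant divisor fails to be big. Once positivity is secured, the argument closes by a direct substitution: feeding the Chern-number expressions together with (1) into the orbifold inequality and simplifying reorganises the terms $\sum_{p}(m_{p}-2)^{2}$ and the Euler contributions into $\bigl(\tfrac{7}{2}d-\tfrac{9}{2}\bigr)dk + t_{2} + t_{3} - \sum_{r\ge4}(r-4)t_{r} \ge 0$, which is the assertion.
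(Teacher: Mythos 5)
The first thing to say is that the paper itself contains no proof of Theorem \ref{PSZR}: it is quoted from Pokora--Roulleau--Szemberg \cite{PRSz}, so the only available comparison is with the proof in that reference. Your overall plan is in fact the plan used there: blow up the ordinary singularities, work with the logarithmic/orbifold Chern data of the resulting pair, invoke Langer's orbifold Bogomolov--Miyaoka--Yau inequality, and convert everything into the $t_{r}$ via the combinatorial count (1). Moreover, the parts of the computation you actually carry out are correct: $\overline{c}_1^{\,2} = (dk-3)^{2} - \sum_{p}(m_{p}-2)^{2}$, $\overline{c}_{2} = 3 - f_{0} + f_{1} + kd(d-3)$, and, using $\sum_{p}m_{p}^{2} = d^{2}k(k-1)+f_{1}$ from (1), the naive bound $\overline{c}_1^{\,2}\le 3\overline{c}_{2}$ does collapse to $f_{0}\ge dk(3-2d)$, which is vacuous for $d\ge 2$ and gives only $f_{0}\ge k$ for $d=1$, exactly as you say.

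The genuine gap is that the step which actually yields the theorem is asserted rather than executed. ``Langer's orbifold BMY with suitable branch weights'' is not one inequality but a family parametrized by the weights, and the entire content of the theorem sits in that choice and in the verification of Langer's hypotheses. Concretely, you would need to (i) write down the weighted boundary explicitly, with weights on the strict transforms $\widetilde{C}_{i}$ and on the exceptional curves $E_{p}$ (note that a constant weight $\alpha$ on the curves is log canonical at an ordinary $r$-fold point only when $\alpha\le 2/r$, which is precisely why the high-multiplicity points must be blown up and given their own carefully chosen weights); (ii) prove nefness of $K_{Y}$ plus that boundary by concrete intersection computations against the $\widetilde{C}_{i}$ and the $E_{p}$ --- this is where $d\ge 2$ and $t_{k}=0$ are genuinely consumed, not through a general appeal to ``enough positivity''; and (iii) push the local orbifold Euler numbers of ordinary $r$-fold points through the inequality and optimize. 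The coefficients $\bigl(\tfrac{7}{2}d-\tfrac{9}{2}\bigr)dk$, $t_{2}$, $t_{3}$ are the \emph{output} of step (iii); declaring that the corrections ``are precisely what generate'' them assumes the conclusion. Nor does the $d=1$ consistency check pin the constants down: the orbifold method actually produces the stronger coefficient $\tfrac{3}{4}$ on $t_{3}$ (this is the form of the inequality in \cite{PRSz}, of which the statement above is a weakening), while Hirzebruch's own line inequality is proved by Kummer covers under the stronger hypothesis $t_{k}=t_{k-1}=0$, so agreement of the final formulas at $d=1$ is weak evidence. In short: right strategy, correct preliminaries, but the decisive middle of the argument is missing, and filling it in is precisely the content of \cite{PRSz}.
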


 As it was recalled in Introduction, the second author formulated a question about the values of Seshadri constants for point-line arrangements in the complex projective plane. There is no logical constraint to restrict our attention only to line arrangements in the plane since we can also study the multipoint Seshadri constants from the viewpoint of curve arrangements and their combinatorial properties. Our aim here is to generalize Question \ref{MQ} to the class of $d$-arrangements of curves.
\begin{definition}
Let $\mathcal{C} = \{C_{1}, ..., C_{k}\} \subset \mathbb{P}^{2}_{\mathbb{C}}$ be a $d$-arrangement of curves. Then we define the \emph{base constant} of $\mathcal{C}$ as 
$${\rm bs}(\mathcal{C}) := {\rm max} \{s \, | \, s = \# \, C_{i} \cap {\rm Sing}(\mathcal{C}), \, C_{i} \in \mathcal{C}\},$$
i.e., this constant is equal to the maximal number of singular points from ${\rm Sing}(\mathcal{C})$ that is contained in some curve $C_{i} \in \mathcal{C}$.
\end{definition}
\label{eq:ses}
The first, naive, attempt to generalize Question \ref{MQ} to $d$-arrangement could be to ask whether for $\mathcal{C}$ one has
\begin{equation}\varepsilon(\mathbb{P}^{2}_{\mathbb{C}}, \mathcal{O}_{\mathbb{P}^{2}_{\mathbb{C}}}(1); {\rm Sing}(\mathcal{C})) = \frac{1}{{\rm bs}(\mathcal{C})}.
\end{equation}
This works in a number of examples. However, in Section $3$, we show that equality (\ref{eq:ses}) fails in case of the so-called Hesse (or Chilean) arrangement of conics -- \cite{Chilean, SR}. Thus, we put forward the following problem.
\begin{question}\label{QuestM}
Let $\mathcal{C}$ be a $d$-arrangement with $k\geq3$ and $d\geq 1$ having singular locus ${\rm Sing}(\mathcal{C})$. Is it true that
$$\varepsilon(\mathbb{P}^{2}_{\mathbb{C}}, \mathcal{O}_{\mathbb{P}^{2}_{\mathbb{C}}}(1); {\rm Sing}(\mathcal{C})) \geq \frac{1}{d(k-1)}$$
and the equality holds if and only if there is a curves $C_{i} \in \mathcal{C}$ for which ${\rm bs}(\mathcal{C}) = d^{2}(k-1)$?
\end{question}
There is a natural temptation to believe that the lowest possible value for the Seshadri constants can be achieved by $d$-star configurations of curves.
\begin{definition}
We say that an arrangement $\mathcal{C} \subset \mathbb{P}^{2}_{\mathbb{C}}$ of $k\geq3$ curves is called a $d$-star configuration if this is an arrangement of $k$ smooth curves, each of degree $d\geq 1$, in generic position, i.e., the only intersection points are ordinary double points.
\end{definition}
If we consider the case $d=1$, then we have at least two types of line arrangements giving the Seshadri constant equal to $\frac{1}{k-1}$, namely star configurations and Hirzebruch quasi-pencil of lines, i.e., an arrangement of $k\geq 4$ lines such that $t_{k-1}=1$ and $t_{2}=k-1$. This example shows that it might be difficult to have a general classification of point-curve arrangements $\mathcal{C}$ which give the Seshadri constant equal to $\frac{1}{d^{2}(k-1)}$. Observe also that the assumption $t_{k}=0$ is essential in that picture. If we consider the case $k=2$ and $d=2$, then we have a configuration of $4$ double intersection points $\mathcal{P}$ and the Seshadri constant can be computed by a line passing through a pair of two distinct points from $\mathcal{P}$ - this is the case that we want to exclude from our discussion due to triviality.

As a warming-up, we are going to show that $d$-star configurations are good candidates for the actual lower bound in Question \ref{QuestM}.
\begin{proposition}
\label{prop:star}
If $\mathcal{C}_{d} = \{C_{1}, ..., C_{k}\}$ is a $d$-star configuration with $k \geq 3$ and $d\geq 1$, then 
$$\varepsilon(\mathbb{P}^{2}_{\mathbb{C}}, \mathcal{O}_{\mathbb{P}^{2}_{\mathbb{C}}}(1); {\rm Sing}(\mathcal{C})) = \frac{1}{d(k-1)}.$$
\end{proposition}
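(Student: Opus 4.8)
The plan is to prove the two inequalities separately: the upper bound $\varepsilon \leq \frac{1}{d(k-1)}$ comes from a single test curve, and the matching lower bound comes from a B\'ezout/double-counting argument run over the configuration curves. The first step is to understand $\mathrm{Sing}(\mathcal{C}_d)$. Because the $C_i$ are smooth of degree $d$ in generic position, B\'ezout shows that each pair $C_i,C_j$ meets transversally in exactly $d^2$ distinct points, no three curves share a point, and every singular point of $\mathcal{C}_d$ is an ordinary node lying on exactly two of the $C_i$. In particular each fixed curve $C_i$ contains exactly $(k-1)d^2$ of these nodes, and since $C_i$ is smooth we have ${\rm mult}_P(C_i)=1$ at each of them.

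For the upper bound I would test the defining infimum on the irreducible curve $D=C_i$. Then $\mathcal{O}_{\mathbb{P}^{2}_{\mathbb{C}}}(1)\cdot C_i = d$ while $\sum_{P}{\rm mult}_P(C_i) = (k-1)d^2$, so the corresponding quotient equals $\frac{d}{(k-1)d^2}=\frac{1}{d(k-1)}$, which already forces $\varepsilon(\mathbb{P}^{2}_{\mathbb{C}},\mathcal{O}_{\mathbb{P}^{2}_{\mathbb{C}}}(1);\mathrm{Sing}(\mathcal{C}))\leq \frac{1}{d(k-1)}$.

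For the lower bound, take any irreducible reduced curve $D$ of degree $m$ passing through at least one point of $\mathrm{Sing}(\mathcal{C}_d)$, and let $S_D$ be the set of nodes lying on $D$. For every index $i$ with $C_i\neq D$ the curves $D$ and $C_i$ share no component, so B\'ezout together with the local inequality $(D\cdot C_i)_P \geq {\rm mult}_P(D)\cdot{\rm mult}_P(C_i)={\rm mult}_P(D)$ gives $\sum_{P\in S_D\cap C_i}{\rm mult}_P(D)\leq md$. Summing over all such $i$ and using that each node of $S_D$ lies on exactly two configuration curves, I obtain $\sum_{P}{\rm mult}_P(D)\leq \frac{k}{2}\,md$ when $D$ is none of the $C_i$ (each node is counted twice), and $\sum_{P}{\rm mult}_P(D)\leq (k-1)md$ when $D=C_i$ for some $i$ (each node on $D$ is counted once, since B\'ezout against $C_i$ itself is unavailable). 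Hence the quotient is at least $\frac{2}{kd}$ in the first case and at least $\frac{1}{d(k-1)}$ in the second; because $\frac{2}{kd}\geq\frac{1}{d(k-1)}$ for all $k\geq 2$, every admissible curve yields a quotient $\geq\frac{1}{d(k-1)}$. Combined with the upper bound this establishes the equality.

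The only delicate point is precisely this case split: when $D$ coincides with one of the configuration curves we lose the B\'ezout relation with that curve, so a node on $D$ is counted once rather than twice in the summation, and this is exactly what drops the bound from $\frac{2}{kd}$ down to the sharp value $\frac{1}{d(k-1)}$. Everything else reduces to standard B\'ezout and the elementary comparison $2(k-1)\geq k$.
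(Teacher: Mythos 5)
Your proof is correct and follows essentially the same route as the paper: the paper runs the identical B\'ezout/double-counting computation, but packaged as a single intersection $D\cdot C$ against the total divisor $C = C_{1}+\cdots+C_{k}$ (using $m_{p}(C)=2$ at every node, which is exactly your ``each node lies on two configuration curves'') and phrased as a proof by contradiction. Your write-up is marginally more complete, since the paper leaves the upper bound and the case $D=C_{i}$ implicit, but the key idea is the same.
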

\begin{proof}
Denote by $\mathcal{P} = \{p_{1}, ..., p_{s}\}$ the set of all double intersection points of $\mathcal{C}_{d}$ and $C = C_{1}+...+C_{k}$. Suppose that there exists an irreducible and reduced curve $D$, different from each $C_{i}$ for $i \in \{1,...,k\}$, having degree $e$ and at each point $p \in \mathcal{P}$ multiplicity $m_{p}(D)$ such that
$$\frac{e}{\sum_{p \in \mathcal{P}} m_{p}(D)} < \frac{1}{d(k-1)}.$$
It means that we have the following bound 
$$(\triangle) : \quad ed(k-1) < \sum_{p \in \mathcal{P}} m_{p}(D).$$
Now we are going to use B\'ezout's theorem, we have
$$edk =D.C = D.(C_{1}+ ... + C_{k}) \geq \sum_{p \in \mathcal{P}}m_{p}(D)\cdot m_{p}(C)\stackrel{(*)}{\geq} 2\sum_{p \in \mathcal{P}} m_{p}(D) \stackrel{(\triangle)}{>} 2ed(k-1),$$
where $(*)$ comes from the fact that all the intersection points of $C$ are double points. This leads to the following inequality:
$$0> 2edk - 2ed -edk = edk - 2ed,$$
a contradiction since $k\geq 3$.
\end{proof}
Our next result tells us that $d$-arrangements are unique in a sense of the associated linear series which means that $\varepsilon_{\mathcal{C}}$ can be viewed as an invariant of a given arrangement $\mathcal{C}$.
\begin{proposition}
Let $\mathcal{C} \subset \mathbb{P}^{2}_{\mathbb{C}}$ be a $d$-arrangement of $k\geq 3$ curves. Consider the blowing up $f: X \rightarrow \mathbb{P}^{2}_{\mathbb{C}}$ at ${\rm Sing}(\mathcal{C})$ with the exceptional divisors $E_{1}, ..., E_{f_{0}}$ and $H = f^{*}(\mathcal{O}_{\mathbb{P}^{2}_{\mathbb{C}}}(1))$. Then the linear series 
\begin{equation}
    \label{linser}
\bigg|{\rm deg}(\mathcal{C})H - \sum_{p \in {\rm Sing}(\mathcal{C})}m_{p}(\mathcal{C})E_{p}\bigg|
\end{equation}
contains exactly one member.
\end{proposition}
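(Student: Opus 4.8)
The plan is to exhibit the total transform of $\mathcal{C}$ as the unique effective divisor in the class $L := \deg(\mathcal{C})H - \sum_{p \in \mathrm{Sing}(\mathcal{C})} m_p(\mathcal{C}) E_p$. Write $\widetilde{C_i} = dH - \sum_{p \in \mathrm{Sing}(\mathcal{C}) \cap C_i} E_p$ for the strict transform of the smooth degree-$d$ curve $C_i$, which passes through each singular point lying on it with multiplicity one. Summing over $i$ and counting, for each $p$, the number of curves through $p$ (which equals $m_p(\mathcal{C})$, the number of branches), one gets $\sum_{i=1}^{k} \widetilde{C_i} = dkH - \sum_{p} m_p(\mathcal{C}) E_p = L$; since $\deg(\mathcal{C}) = dk$, this already shows that $|L|$ is nonempty. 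It therefore suffices to prove that every effective divisor $D$ with $D \equiv L$ satisfies $D = \sum_{i} \widetilde{C_i}$.

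The key is to show that each $\widetilde{C_i}$ is a negative curve. Using $H^2 = 1$, $H \cdot E_p = 0$ and $E_p \cdot E_q = -\delta_{pq}$, I would first record the relevant intersection numbers. Since two distinct components $C_i, C_j$ meet in exactly $d^2$ points by B\'ezout, and all of these are ordinary (hence transversal) points of $\mathrm{Sing}(\mathcal{C})$ that are blown up, the strict transforms separate: $\widetilde{C_i} \cdot \widetilde{C_j} = d^2 - d^2 = 0$ for $i \neq j$. For the self-intersection, writing $N_i := \#(\mathrm{Sing}(\mathcal{C}) \cap C_i)$, one gets $\widetilde{C_i}^2 = d^2 - N_i$, and consequently $L \cdot \widetilde{C_i} = \sum_{j} \widetilde{C_j} \cdot \widetilde{C_i} = \widetilde{C_i}^2 = d^2 - N_i$.

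The main step — and the place where the hypotheses genuinely enter — is the estimate $N_i > d^2$, that is, $\widetilde{C_i}^2 < 0$ for every $i$. Here I would combine the per-component identity $d^2(k-1) = \sum_{p \in \mathrm{Sing}(\mathcal{C}) \cap C_i}(m_p - 1)$ recalled above with the defining property of a $d$-arrangement that no point lies on all $k$ curves, so that $m_p \leq k-1$, hence $m_p - 1 \leq k-2$, at every $p$. This yields $d^2(k-1) \leq N_i(k-2)$, so $N_i \geq \frac{d^2(k-1)}{k-2} > d^2$ because $k \geq 3$; as $N_i$ is an integer this gives $\widetilde{C_i}^2 = d^2 - N_i \leq -1$. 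This is exactly the point at which both $k \geq 3$ and the no-common-point condition are indispensable.

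Finally I would conclude by negativity. For any effective $D \equiv L$ we have $D \cdot \widetilde{C_i} = L \cdot \widetilde{C_i} = \widetilde{C_i}^2 < 0$; since $\widetilde{C_i}$ is irreducible, a negative intersection forces $\widetilde{C_i}$ to be a component of $D$ (otherwise $D$ and $\widetilde{C_i}$ would share no component and meet nonnegatively). As this holds for all $i$ and the $\widetilde{C_i}$ are distinct irreducible curves, the divisor $D - \sum_{i} \widetilde{C_i}$ is effective and linearly equivalent to $L - L = 0$, hence is the zero divisor. Therefore $D = \sum_{i} \widetilde{C_i}$, proving that $|L|$ has exactly one member. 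I expect the only delicate points to be the self-intersection bookkeeping and the verification that all $d^2$ pairwise intersections are indeed among the blown-up points; once the bound $m_p \leq k-1$ is in hand, the inequality $N_i > d^2$ is immediate.
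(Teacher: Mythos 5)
Your proof is correct, and its skeleton is the same as the paper's: both identify the unique member as the sum of strict transforms $\sum_i \widetilde{C_i}$, and both rest on the two facts $\widetilde{C_i}\cdot\widetilde{C_j}=0$ for $i\neq j$ and $\widetilde{C_i}^{\,2}\leq -1$. Where you diverge is in how the two key steps are executed. For the negativity, the paper argues by a pigeonhole-type observation: one always has $N_i\geq d^2$ (already any single other curve cuts out $d^2$ distinct points on $C_i$), and $N_i=d^2$ would force all $k$ curves through the same $d^2$ points, contradicting the no-common-point condition; you instead deduce $N_i\geq d^2(k-1)/(k-2)>d^2$ from the identity $d^2(k-1)=\sum_{p\in {\rm Sing}(\mathcal{C})\cap C_i}(m_p-1)$ combined with $m_p\leq k-1$. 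Both use exactly the same hypotheses ($t_k=0$ and $k\geq 3$), so this is a matter of taste. The more substantive difference is the final step: the paper concludes in one line by appealing to the uniqueness of the Zariski decomposition (the class is its own negative part since the support of $C'=\sum_i \widetilde{C_i}$ is negative definite), whereas you give a self-contained argument --- any effective $D$ in the class meets each irreducible $\widetilde{C_i}$ negatively (using orthogonality to get $L\cdot\widetilde{C_i}=\widetilde{C_i}^{\,2}<0$), hence contains each as a component, and then $D-\sum_i\widetilde{C_i}$ is effective and linearly equivalent to zero, hence zero. Your version is more elementary and actually supplies details that the paper's citation leaves implicit; what the paper's appeal to Zariski decomposition buys is brevity and an argument that would survive even if the negative curves were not pairwise orthogonal, since your component-forcing step leans on the orthogonality $\widetilde{C_i}\cdot\widetilde{C_j}=0$ to know the sign of $L\cdot\widetilde{C_i}$.
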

\begin{proof}
  Let us denote by $C_{i}'$ the strict transform of $C_{i}$ under the blowing up $f$ for $i \in \{1, ..., k\}$. By the assumption that $t_{k}=0$ we can show that each curve $C_{i}'$ has the self-intersection number less than or equal to $-1$. This follows from the fact that each curve $C_{i}$ contains at least $d^{2}+1$ points. Indeed, if not, then by the assumption that all intersection points are ordinary the curve $C_{i}$ would contain exactly $d^2$ intersection points, but it implies that all $k$ curves $C_{j}$ meet exactly in $d^2$ points, a contradiction. Now we are going to use the uniqueness of the Zariski decomposition of $C' = C_{1}' + ... + C_{k}'$ -- it is an effective cycle such that for $i\neq j$ we have $C_{i}'.C_{j}' = 0$ and $C_{i}' \leq -1$, which means that the intersection matrix of this cycle is negative definite, so we conclude that $C'$ is the unique member in linear series (\ref{linser}).
\end{proof}
\begin{remark}
As it was pointed out by M. Dumnicki, one can relax the assumption that for $d$-arrangements with $d\geq 2$ one should have $t_{k}=0$ -- it is enough to assume that on each curve there is at least one point of multiplicity less than $k$. 
\end{remark}
Our main contribution, from the viewpoint of Question \ref{QuestM}, provides a lower bound on configurational Seshadri constants.
\begin{theorem}
Let $\mathcal{C}$ be a $d$-arrangement of $d\geq 1$ with $k\geq 3$. In the case of $d=1$ we assume additionally that $t_{d-1}=0$. Then 
$$\varepsilon_{\mathcal{C}}(
\mathcal{O}_{\mathbb{P}^{2}_{\mathbb{C}}}(1)) \geq \frac{1}{2dk + 3d/2 -9/2}$$
\end{theorem}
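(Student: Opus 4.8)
The plan is to translate the claimed inequality into a single upper bound on the total multiplicity $f_{1}$ and then feed the appropriate Hirzebruch-type inequality into the combinatorial count (1). Since $\deg(\mathcal{C})=dk$ and, by the definition of $f_{1}$, one has $\sum_{P\in\mathrm{Sing}(\mathcal{C})}\mathrm{mult}_{P}(\mathcal{C})=f_{1}$, the assertion $\varepsilon_{\mathcal{C}}(\mathcal{O}_{\mathbb{P}^{2}_{\mathbb{C}}}(1))\ge \frac{1}{2dk+3d/2-9/2}$ is equivalent (the denominator being positive for $k\ge3$, $d\ge1$) to
\[
f_{1}\;\le\; dk\Big(2dk+\tfrac32 d-\tfrac92\Big)=2d^{2}k^{2}+\tfrac32 d^{2}k-\tfrac92 dk .
\]
So I would reduce the whole theorem to producing this one estimate for $f_{1}=\sum_{r\ge2}r\,t_{r}$.

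For $d\ge2$ I would start from Theorem \ref{PSZR}, rewritten as $\sum_{r\ge4}(r-4)t_{r}\le(\tfrac72 d-\tfrac92)dk+t_{2}+t_{3}$ (the hypothesis $t_{k}=0$ is built into the definition of a $d$-arrangement). The key algebraic step is the splitting
\[
f_{1}=2t_{2}+3t_{3}+\sum_{r\ge4}(r-4)t_{r}+4\sum_{r\ge4}t_{r}.
\]
Substituting the Hirzebruch-type bound for $\sum_{r\ge4}(r-4)t_{r}$ and gathering the surviving terms into $4f_{0}=4\sum_{r\ge2}t_{r}$ gives, after discarding a favourable $-t_{2}$,
\[
f_{1}\le 4f_{0}+\Big(\tfrac72 d-\tfrac92\Big)dk .
\]

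It then remains to bound the number of singular points $f_{0}$. Here I would use (1) in the form $\sum_{r\ge2}r(r-1)t_{r}=d^{2}k(k-1)$ together with $r(r-1)\ge2$ for every $r\ge2$, which yields $f_{0}\le\tfrac12 d^{2}k(k-1)$. Plugging this in collapses the estimate exactly onto the target $2d^{2}k^{2}+\tfrac32 d^{2}k-\tfrac92 dk$, proving the theorem for $d\ge2$. For $d=1$ I would run the identical bookkeeping but feed in Hirzebruch's inequality in the form $\sum_{r\ge4}(r-4)t_{r}\le t_{2}+t_{3}-k$; the same manipulation then gives $f_{1}\le 4f_{0}-k\le 2k(k-1)-k=k(2k-3)$, which is precisely the claimed bound specialised to $d=1$.

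The step I expect to require the most care is the verification of hypotheses rather than the algebra. Hirzebruch's inequality is only available for $k\ge6$ and needs $t_{k}=t_{k-1}=0$; the extra assumption stated for $d=1$ (which I read as $t_{k-1}=0$, i.e. the exclusion of near-pencils) supplies the latter, but the line arrangements with $3\le k\le5$ fall outside its range and would have to be disposed of separately. I would also remark that the estimate is intentionally coarse: combining (1) with the sharper inequality $r(r-1)\ge r$ already yields $f_{1}\le 2\sum_{r\ge2}\binom{r}{2}t_{r}=d^{2}k(k-1)$, hence the stronger bound $\varepsilon_{\mathcal{C}}\ge\frac{1}{d(k-1)}$. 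The point of the Hirzebruch-based route is that it is the one producing the specific profile $2dk+3d/2-9/2$ appearing in the statement, tying the bound directly to the $d$-arrangement inequalities of Section~2.
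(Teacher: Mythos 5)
Your proof is correct and is essentially the paper's own argument: the paper combines the Pokora--Roulleau--Szemberg inequality (Hirzebruch's inequality when $d=1$), rewritten as $f_{1}-4f_{0}=\sum_{r\geq 2}(r-4)t_{r}\leq \left(\tfrac{7}{2}d-\tfrac{9}{2}\right)dk-t_{2}$, with the bound $f_{0}\leq d^{2}\binom{k}{2}$ from the combinatorial count, which is exactly your bookkeeping in a slightly rearranged form. Your side remarks are also apt: the paper's proof likewise invokes Hirzebruch's inequality for $d=1$ without addressing $3\leq k\leq 5$ or the near-pencil hypothesis (the statement's $t_{d-1}=0$ is indeed a typo for $t_{k-1}=0$), and your observation that the combinatorial count alone already yields the stronger bound $\varepsilon_{\mathcal{C}}\geq \frac{1}{d(k-1)}$ is correct.
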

\begin{proof}
Our strategy is based on the combinatorial features of $\mathcal{C}$. Let us denote by $C = C_{1} + ... + C_{k}$. Then we can write 
\begin{equation}\label{ses}
\varepsilon_{\mathcal{C}}(\mathcal{O}_{\mathbb{P}^{2}_{\mathbb{C}}}(1)) = \frac{{\rm deg}(C)}{\sum_{p \in {\rm Sing}(\mathcal{C})}m_{p}} = \frac{dk}{f_{1}}.
\end{equation}
Our goal here is to find a reasonable upper-bound on the number $f_{1} = \sum_{r\geq 2} rt_{r}$. In order to do so, we are going to use Theorem \ref{PSZR} and Hirzebruch's inequality, namely
$$\bigg(\frac{7d}{2}-\frac{9}{2}\bigg)dk - t_{2} \geq \sum_{r\geq 2}(r-4)t_{r} = f_{1} - 4f_{0}.$$
Since $t_{2} \geq 0$ we have
$$\bigg(\frac{7d}{2}-\frac{9}{2}\bigg)dk + 4f_{0} \geq f_{1}.$$
Obviously one always has $$k \leq f_{0} \leq d^{2} {k \choose 2}$$ which leads to
$$f_{1} \leq 2d^{2}k^{2} + \frac{3}{2}d^{2}k - \frac{9}{2}dk,$$
so finally we get
$$\varepsilon_{\mathcal{C}}(\mathcal{O}_{\mathbb{P}^{2}_{\mathbb{C}}}(1)) = \frac{dk}{f_{1}}\geq \frac{dk}{2d^{2}k^{2}+3d^{2}k/2-9dk/2} = \frac{1}{2dk + 3d/2 - 9/2}.$$

\end{proof}
\begin{remark}
The punchline of the above result is that, abusing the $\mathcal{O}$-notation, for $d$-configurations one has
$$\varepsilon_{\mathcal{C}}(
\mathcal{O}_{\mathbb{P}^{2}_{\mathbb{C}}}(1)) = \mathcal{O}\bigg(\frac{1}{dk}\bigg),$$
so we arrive at the predicted order of magnitude.
\end{remark}
\section{The multipoint Seshadri constants via $d$-arrangements}
In this section we are going to present a comparison between configurational Seshadri constants and multipoint Seshadri constants centered at singular loci of $d$-arrangements. It is clear that one always has
$$\varepsilon_{\mathcal{C}}(
\mathcal{O}_{\mathbb{P}^{2}_{\mathbb{C}}}(1)) \geq \varepsilon(\mathbb{P}^{2}_{\mathbb{C}},
\mathcal{O}_{\mathbb{P}^{2}_{\mathbb{C}}}(1); {\rm Sing}(\mathcal{C})).$$ 
First of all, our aim here is to study possible discrepancies between these constants. We start with the so-called large pencils of lines for which the discrepancies are rather significant -- from a viewpoint of computations this stands against our intuition.
\begin{example}
Let $\mathcal{H}$ be a Hirzebruch quasi-pencil, i.e., an arrangement which consists of $k\geq 4$ lines with $t_{k-1}=1$ and $t_{2} = k-1$. It can be easily checked that 
$$\varepsilon(\mathbb{P}^{2}_{\mathbb{C}}, \mathcal{O}_{\mathbb{P}^{2}_{\mathbb{C}}}(1); {\rm Sing}(\mathcal{H})) = \frac{1}{k-1}.$$
On the other side
$$\varepsilon_{\mathcal{H}}(\mathcal{O}_{\mathbb{P}^{2}_{\mathbb{C}}}(1)) = \frac{k}{2\cdot(k-1) + k-1} = \frac{k}{3k-3}$$ which shows that in general we have $\varepsilon_{\mathcal{H}} > \varepsilon$.
\end{example}
\begin{example}
Let us consider arrangements of $k$ lines having $t_{k-2} = 1$. There are two types of such arrangements, namely either
\begin{itemize}
    \item $t_{k-2}=1$ and $t_{2}=2k-3$, or
    \item $t_{k-2}=1$, $t_{3}=1$, $t_{2}=2k-6$.
\end{itemize}
Let us consider the first case (the second one is analogous) and denote the associated arrangement by $\mathcal{HL}$. Among all lines we can take a line passing thought exactly $k-1$ double points. An easy inspection shows that this line computes the Seshadri constant which is equal to $\frac{1}{k-1}$. On the other side,
$$\varepsilon_{\mathcal{HL}}(\mathcal{O}_{\mathbb{P}^{2}_{\mathbb{C}}}(1)) = \frac{k}{2\cdot(2k-3) + k-2} = \frac{k}{5k-8}.$$
\end{example}

Now, we would like to begin our comparison for more complicated arrangements from a viewpoint of combinatorics. We made our comparison with respect to a very interesting class of line arrangements, namely simplicial line arrangements. Let us recall that $\mathcal{L} \subset \mathbb{P}^{2}_{\mathbb{R}}$ is  simplicial if $M(\mathcal{L}) := \mathbb{P}^{2}_{\mathbb{R}} \setminus \bigcup_{H \in \mathcal{L}} H$ the complement of $\mathcal{L}$ in $\mathbb{P}^{2}_{\mathbb{R}}$ consists of the union of disjoint triangles. Our wide computer experiments suggest that the configurational Seshadri constants are more accurate when given line arrangements are \emph{symmetric}, which can be understood both from a viewpoint of the multiplicities of singular points and the symmetry groups of arrangements. From this perspective simplicial arrangements have both mentioned features. Since our computations are rather cumbersome, we decided to present a short table of $11$ simplicial line arrangements for which we provide the actual values of the Seshadri constants and the corresponding values of the configurational Seshadri constants. Here by $\mathcal{A}_{k}(n)$ we denote, according to Cuntz's list \cite{Cuntz}, a simplicial arrangement of $n$ lines of type $k$.
\begin{center}
{\renewcommand{\arraystretch}{1.3}
\begin{tabular}{c|c|c|c}
\hline 
$\mathcal{C}=\mathcal{A}_{k}(n)$ & $ t=(t_2,t_3,t_4,\ldots) $ & $\varepsilon_{\mathcal{C}}$ & $\varepsilon$ \\ 
\hline 
$\mathcal{A}_{1}(6)$ & $(3,4)$ & $\tfrac{1}{3}$ & $\tfrac{1}{3}$ \\ 
\hline 
$\mathcal{A}_{1}(7)$ & $(3,6)$ & $\tfrac{7}{24}$ & $\tfrac{1}{4}$ \\ 
\hline 
$\mathcal{A}_{1}(8)$ & $(4,6,1)$ & $\tfrac{4}{15}$ & $\tfrac{1}{4}$ \\ 
\hline 
$\mathcal{A}_{1}(9)$ & $(6,4,3)$ & $\tfrac{1}{4}$ & $\tfrac{1}{4}$ \\ 
\hline 
$\mathcal{A}_{1}(10)$ & $(5,10,0,1)$ & $\tfrac{2}{9}$ & $\tfrac{1}{5}$ \\ 
\hline 
$\mathcal{A}_{2}(10)$ & $(6,7,3)$ & $\tfrac{2}{9}$ & $\tfrac{1}{6}$ \\ 
\hline 
$\mathcal{A}_{3}(10)$ & $(6,7,3)$ & $\tfrac{2}{9}$ & $\tfrac{1}{5}$ \\
\hline 
$\mathcal{A}_{1}(11)$ & $(7,8,4)$ & $\tfrac{11}{54}$ & $\tfrac{1}{6}$ \\ 
\hline 
$\mathcal{A}_{1}(12)$ & $(6,15,0,0,1)$ & $\tfrac{4}{21}$ & $\tfrac{1}{6}$ \\ 
\hline 
$\mathcal{A}_{2}(12)$ & $(8,10,3,1)$ & $\tfrac{4}{21}$ & $\tfrac{1}{6}$ \\ 
\hline 
$\mathcal{A}_{3}(12)$ & $(9,7,6)$ & $\tfrac{4}{21}$ & $\tfrac{1}{6}$ \\ 
\hline 
\end{tabular} 
}
\end{center}

From now on, we would like to focus on $d$-arrangements. We must emphasize in this point that there are not many examples of such arrangements in literature, and we are going to look at those that are interesting for our considerations. We start with $d$-star configurations.
\begin{example}
Consider $d$-star arrangements $\mathcal{C}_{d}$ with $d\geq 1$ and $k\geq 3$ curves. We have shown in Proposition \ref{prop:star} that one always has
$$\varepsilon(\mathbb{P}^{2}_{\mathbb{C}}, \mathcal{O}_{\mathbb{P}^{2}_{\mathbb{C}}}(1); {\rm Sing}(\mathcal{C}_{d})) = \frac{1}{d(k-1)}.$$ Now we compute the configurational Seshadri constant of $\mathcal{C}_{d}$. We have exactly $t_{2} = d^{2}\cdot \frac{(k^{2}-k)}{2}$ double intersection points, hence
$$\varepsilon_{\mathcal{C}_{d}}(
\mathcal{O}_{\mathbb{P}^{2}_{\mathbb{C}}}(1)) = \frac{dk}{2t_{2}} = \frac{dk}{d^{2}k(k-1)} = \frac{1}{d(k-1)}.$$
\end{example}
\begin{example}
Let us now consider a symmetric $(6_{5}, 6_{5})$-arrangement $\mathcal{PC}$ which is an arrangement of $6$ conics such that $t_{5}=6$. The arrangement is constructed by fixing $6$ points in general position, and then we take all the conics passing through $5$ points from the set of mentioned $6$ points (and in fact it works over the reals). Obviously the Seshadri constant is equal to $\frac{2}{5}$ since $6$ points are not contained in a conic, and we have
$$\varepsilon_{\mathcal{PC}}(
\mathcal{O}_{\mathbb{P}^{2}_{\mathbb{C}}}(1)) = \frac{2}{5}.$$
\end{example}
The next arrangement has been discovered recently by Dolgachev, Laface, Persson, and Urz\'ua in \cite{Chilean}, and also independently by Kohel, Roulleau and Sarti in \cite{SR}.
\begin{example}[The Hesse arrangement of conics]
We would like to take Kohel-Roulleau-Sarti's description. It is well-known that the dual curve to a smooth elliptic curve $\mathcal{E}$ is an irreducible sextic curve having exactly $9$ cusps -- these cusps correspond to the set of $9$ flex points of $\mathcal{E}$. It turns out that there are exactly $12$ irreducible conics such that each conic is passing through a subset of exactly $6$ points of the set of $9$ points determined by the cusps. The resulting point-conic configuration $\mathcal{CL}$ is $(9_{8}, 12_{6})$-configuration, i.e., we have exactly $9$ points of multiplicity $8$, and there are also exactly $12$ nodes. Even more is true, these $12$ nodes are exactly the triple intersection points of the dual Hesse arrangement of $9$ lines. Using a combinatorial count we know that on each conic we have exactly $6$ points of multiplicity $8$ and exactly $2$ nodes. First of all,
$$\varepsilon_{\mathcal{CL}}(
\mathcal{O}_{\mathbb{P}^{2}_{\mathbb{C}}}(1)) = \frac{24}{24 + 72} = \frac{1}{4}.$$
Now we are going to look at potential curves which might compute the Seshadri ratio. Firstly, taking any conic from the arrangement, passing through $8$ singular points, we obtain the ratio equal to $\frac{1}{4}$. On the other side, since the twelve points are the triple intersection points of the dual Hesse arrangement, any line from the dual Hesse arrangement is passing through exactly $4$ points from the set of $12$ nodes and one additional point which turns out to be one of the points of multiplicity $8$. Such a line gives us the ratio equal to $\frac{1}{5}$. Now we are going to show that indeed one has
$$\varepsilon(\mathbb{P}^{2}_{\mathbb{C}},\mathcal{O}_{\mathbb{P}^{2}_{\mathbb{C}}}(1); {\rm Sing}(\mathcal{CL})) = \frac{1}{5}.$$

We will argue in a standard way, but we must use a very specific property of the set of all singular points of the arrangement that is not combinatorial at all. Suppose that there exists an irreducible and reduced plane curve $D$ of degree $e$, different from each line contained in the dual Hesse arrangement of lines passing through the nodes and different from each conic in $\mathcal{CL}$, having the property that
$$\frac{e}{\sum_{i=1}^{21}m_{p_{i}}(D)} < \frac{1}{5},$$
which means that
$$5e < \sum_{i=1}^{21}m_{p_{i}}(D).$$
The position of singular points implies the existence of a very specific curve, namely there exists a plane quintic curve passing through all the $21$ singular points, which can be easily checked with use of Singular script, see Appendix. Let us denote this quintic curve by $Q$. In fact, $Q$ is reducible and it can be taken as a sum of $3$ lines from the dual Hesse arrangement and an irreducible conic from Hesse arrangement of conics. Observe that
$$5e = D.Q \geq \sum_{i=1}^{21}m_{p_{i}}(D)\cdot m_{p_{i}}(Q) \geq \sum_{i=1}^{21}m_{p_{i}}(D) > 5e,$$
a contradiction.
\end{example}

\begin{remark}
The Hesse arrangement of $12$ conics, due to its very specific geometry, should be in fact considered as a conic-line arrangement. If we consider $12$ conics and $9$ lines we get an arrangement $\mathcal{HCL}$ consisting of $21$ curves and having the following intersection points
$$t_{9} = 9, \quad t_{5}=12, \quad t_{2}=72.$$
The arrangement $\mathcal{HCL}$ is described in \cite{Chilean, PSzem}.
\end{remark}
\section*{Acknowledgments}
We would like to warmly thank Marcin Dumnicki and Halszka Tutaj-Gasi\'nska for useful comments that allowed to improve the note, and to Grzegorz Malara for useful discussions. The second author was partially supported by National Science Center (Poland) Sonata Grant Nr \textbf{2018/31/D/ST1/00177}. Finally, we would like to warmly thank an anonymous referee for all useful comments and suggestions.
\section*{Appendix}
Here we present our Singular script that verifies the existence of a plane quintic passing though the singular locus of the Hesse arrangement of $12$ conics. We provide, en passant, the coordinates of all singular points of the Hesse arrangement of conics.
\begin{verbatim}
ring R=(0,u),(x,y,z),dp;
minpoly=31+36*u+27*u2-4*u3+9*u4+u6;
ideal P(1)=90*y+(-4u5+u4-40u3+26u2-92u-91)*z,x-z;
ideal P(2)=36*y+(u5-u4+10u3-20u2+29u-11)*z,10*x+(-u2+2u+11)*y+(-4u2-4u-6)*z;
ideal P(3)=60*y+(u5+u4+10u3+16u2+13u+79)*z,6*x+(u2+2u-11)*y+(-4u2+4u-10)*z;
ideal P(4)=90*y+(u5-4u4+10u3-29u2+53u-11)*z,x-y;
ideal P(5)=60*y+(u5+u4+5u3+u2-2u+44)*z,6*x+(-4u2+4u-10)*y+(u2+2u-11)*z;
ideal P(6)=36*y+(-u5+u4-7u3+11u2-20u-22)*z,10*x+(-4u2-4u-6)*y+(-u2+2u+11)*z;
ideal P(7)=y-z,90*x+(-4u5+u4-40u3+26u2-92u-91)*z;
ideal P(8)=180*y+(4u5-u4+40u3-26u2+182u+181)*z,x+(-u-1)*y+(-u+1)*z;
ideal P(9)=180*y+(-4u5+u4-40u3+26u2-182u-1)*z,x+(-u+1)*y+(-u-1)*z;
ideal P(10)=z,x;
ideal P(11)=y,x;
ideal P(12)=z,y;
ideal P(13)=y-z,x-z;
ideal P(14)=180*y+(4u5-u4+40u3-26u2+182u+181)*z,
180*x+(-4u5+u4-40u3+26u2-182u-1)*z;
ideal P(15)=180*y+(-4u5+u4-40u3+26u2-182u-1)*z,
180*x+(4u5-u4+40u3-26u2+182u+181)*z;
ideal P(16)=180*y+(4u5-u4+40u3-26u2+182u+181)*z,x-z;
ideal P(17)=180*y+(-4u5+u4-40u3+26u2-182u-1)*z,
180*x+(-4u5+u4-40u3+26u2-182u-1)*z;
ideal P(18)=y-z,180*x+(4u5-u4+40u3-26u2+182u+181)*z;
ideal P(19)=180*y+(-4u5+u4-40u3+26u2-182u-1)*z,x-z;
ideal P(20)=y-z,180*x+(-4u5+u4-40u3+26u2-182u-1)*z;
ideal P(21)=180*y+(4u5-u4+40u3-26u2+182u+181)*z,
180*x+(4u5-u4+40u3-26u2+182u+181)*z;
ideal I=1;int i;
for(i=1;i<=21;i++){
I=intersect(I,P(i));
}
I=std(I);
I[1];
\end{verbatim}

\bigskip

\noindent
Marek Janasz,\\
Department of Mathematics, Pedagogical University of Cracow,
Podchor\c a\.zych 2,
PL-30-084 Krak\'ow, Poland.

\nopagebreak
\noindent
\textit{E-mail address:} \texttt{marek.janasz@up.krakow.pl}\\

\bigskip
\noindent
   Piotr Pokora,\\
   Department of Mathematics, Pedagogical University of Cracow,
   Podchor\c a\.zych 2,
   PL-30-084 Krak\'ow, Poland.

\nopagebreak
\noindent
   \textit{E-mail address:} \texttt{piotrpkr@gmail.com, piotr.pokora@up.krakow.pl}\\
\end{document}